\newcommand{\doublespacinggg}{\let\CS=\@currsize\renewcommand{\baselinestretch}{1.55}\tiny\CS}
\newcommand{\doublespacingg}{\let\CS=\@currsize\renewcommand{\baselinestretch}{1.75}\tiny\CS}
\newtheorem{thm}{Theorem}[section]
 \newtheorem{cor}[thm]{Corollary}
   \newtheorem{examp}[thm]{Example}
  \newtheorem{defin}[thm]{Definition}
  \newtheorem{prop}[thm]{Proposition}
  \newtheorem{rem}[thm]{Remark}
\newcommand{\be}{\begin{equation}}
\newcommand{\ee}{\end{equation}}
\newcommand{\bea}{\begin{eqnarray}}
\newcommand{\eea}{\end{eqnarray}}
\newcommand{\bee}{\begin{eqnarray*}}
\newcommand{\eee}{\end{eqnarray*}}
\newcommand{\0}{${\bf 0}$}
\title{On a generalization of McCoy Rings}
\author{ \small Mohammad Vahdani Mehrabadi $^{*}$, 
Shervin Sahebi $^{**}$ and Hamid H. S. Javadi $^{***}$\\ $^{*,**}$ 
Department of Mathematics, Islamic Azad University,\\
Central Tehran Branch, 13185/768, Iran, \\email: 
md \underline{~}vahdani@yahoo.com; sahebi@iauctb.ac.ir 
\\ 
$^{***}$ Department of Mathematics and Computer Science, Shahed University,\\ Tehran, Iran, email: h.s.javadi@shahed.ac.ir.\\}
\begin{document}

\date{}
\maketitle \noindent \vspace{-.8cm}

\doublespacingg

\begin{center}
\begin{minipage}{11cm} \footnotesize { \textsc{Abstract:}
We introduce Central McCoy rings, which are a generalization of McCoy rings and investigate their properties. For a ring $R$, we prove that $R$ is right Central McCoy if and only if the polynomial ring $R[x]$ is right Central McCoy. Also, we give some examples to show that if $R$ is right Central McCoy, then $M_{n}(R)$ and $T_{n}(R)$ are not necessary right Central McCoy, but $D_{n}(R)$ and $V_{n}(R)$ are right Central McCoy, where $D_{n}(R)$ and $V_{n}(R)$ are the subrings of the triangular matrices with constant main diagonal and constant main diagonals, respectively.
  }

 \end{minipage}
\end{center}

 \vspace*{.4cm}

 \noindent {\footnotesize {\bf Mathematics Subject Classification 2010:} 16U20, 16S36, 16W20.  \\
 {\bf Keywords:} McCoy ring, Central McCoy ring, Polynomial ring, Upper triangular matrix ring  }

\vspace*{.2cm}

\doublespacing

\section{Introduction}
Throughout this paper, $R$ denotes an associative ring with identity. 
For notation $M_{n}(R)$, $T_{n}(R)$, $S_{n}(R)$, $R[x]$ and $C(R)$ denote, 
the $n \times n$ matrix ring over $R$, upper triangular matrix ring over $R$, 
diagonal matrix over $R$, polynomial ring over $R$ and center of a ring $R$, respectively. 
We denote the identity matrix and unit matrices in ring $M_n(R)$, 
by $I_n$ and $E_{ij}$, respectively. 
Rege -Chhawchharia [13] called a noncommutative ring $R$ right McCoy 
if whenever polynomials 
$f(x)=\sum_{i=0}^n a_{i}x^{i}$, 
$g(x)=\sum_{j=0}^m b_{j}x^{j}\in R[x]\setminus\{0\}$  
satisfy $f(x)g(x)=0$, there exists nonzero elements $r\in R$ 
~such ~that~ $a_{i}r=0$. 
Left McCoy rings are defined similarly. 
A number of papers have been written on the McCoy property of ring 
(see, e. g., [2, 4, 8, 10, 12, 14]). In [12], 
it is shown that there exists a left McCoy ring but not right McCoy. 
The name "McCoy" was chosen because 
McCoy [11] had noted that every commutative ring satisfies this condition. 
In [10], it is shown that $R$ is McCoy rings if and only if $R[x]$ is McCoy. 
In [2], it is shown that if $e$ is a central idempotent element of $R$, 
then $R$ is a right McCoy ring if and only if 
$eR$ is a right McCoy ring if and only if 
$(1-e)R$ is a right McCoy ring. 
Also in [14], it is shown, 
if there exists the classical right quotient ring $Q$ of a ring $R$, 
then $R$ is right McCoy if and only if $Q$ is right McCoy. 
Armendariz rings give another family of McCoy rings. 
A ring $R$ is called Armendariz [13] if whenever 
polynomials $f(x)=\sum_{i=0}^n a_{i}x^{i}$, $g(x)=\sum_{j=0}^m b_{j}x^{j}\in R[x]\setminus\{0\}$  satisfy $f(x)g(x)=0$, 
then $a_{i}b_{j}=0$ for each $i$ and $j.$ 
 Agayev, G$\ddot{u}$ng$\ddot{o}$glu, 
 Harmonic and Halicio$\breve{g}$lu [1] called a ring $R$ Central Armendariz 
 if whenever polynomials $f(x)=\sum_{i=0}^n a_{i}x^{i}$, 
 $g(x)=\sum_{j=0}^m b_{j}x^{j}\in R[x]\setminus\{0\}$  
 satisfy $f(x)g(x)=0$, then $a_{i}b_{j}\in C(R) $ for all $i$ and $j.$ 
 They showed that the class of Central Armendariz rings 
 lies strictly between classes of Armendariz rings and abelian rings 
 (That is every idempotent of it belong to $C(R)$).

Motivated by the above results, 
we investigate a generalization of right McCoy rings, 
which we call it right Central McCoy. 
We say that a ring $R$ is right Central McCoy 
(respectively left Central McCoy) if for each pair of nonzero polynomials 
$f(x)=\sum_{i=0}^n a_{i}x^{i}$, $g(x)=\sum_{j=0}^m b_{j}x^{j}\in R[x]$, 
with  $f(x)g(x)=0$, then there exists a nonzero element 
$r \in R$ with $a_{i}r \in C(R)$ (respectively $rb_{j} \in C(R)$). 
A ring is central McCoy if it is both left and right Central McCoy. 
It is clear that McCoy rings are Central McCoy, 
but the converse is not always true. 
According to Cohn [6], a ring $R$ is called reversible 
if $ab=0$ implies $ba=0$, for $a , b \in R$. 
A ring $R$ is said to be semicommutative if for all $a , b \in R$  
if $ab=0$ implies $aRb=0$. 
Clearly, reduced rings are reversible and reversible rings are also semicommutative. 
In [12, Theorem 2], Nielsen showed that 
reversible rings are McCoy and therefore are Central McCoy. 
Hence reduced rings are Central McCoy.
  
  \section{Central McCoy Rings}

 \noindent
We start this section by the following definition:
\begin{defin}\label{def 1.1}  
A ring $R$ is said to be right Central McCoy 
(respectively left Central McCoy) 
if for each pair of nonzero polynomials $f(x)=\sum_{i=0}^n a_{i}x^{i}$, 
$g(x)=\sum_{j=0}^m b_{j}x^{j}\in R[x]\setminus\{0\}$, 
with  $f(x)g(x)=0$, then there exists a nonzero element $r \in R$ 
with $a_{i}r \in C(R)$ (respectively $rb_{j} \in C(R)$). 
A ring $R$ is called Central McCoy if it is both left and right Central McCoy. 
\end{defin}

It is clear that right (resp. left) McCoy rings are right (resp. left) Central McCoy, 
but the converse is not always true by the following example.

\begin{examp}\label{examp1} 
Let $K$ be a field and $K<x,y,z>$ be the free algebra 
with noncommuting indeterminates $x,y,z$ over $K$. 
Set $R$ be the factor ring of $K<x,y,z>$ with relations
 \begin{center}
$x^{2}=yx=x~~ , ~~z^{2}=0~~ , ~~y^{2}=xy=y~~ , ~~zx=xz=yz=zy=z$
\end{center} 
We coincide $\{x,y,z\}$ with their images in $R$, for simplicity. 
Consider the subring of $R$ generated by $\{\alpha, x, y, z| \alpha \in K \}$, say $S$. 
Then every element of $S$ is of the form, 
$\alpha_{1}+\alpha_{2}x+\alpha_{3}y+\alpha_{4}z$ 
with $\alpha_{i}$'s in the field $K$. 
By the construction of $S$, 
we have $(x+yt)((1-x)+(1-y)t)=0$ while $x+yt$ 
and $(1-x)+(1-y)t$ are nonzero polynomials over $S$. 
Assume $(x+yt)(\alpha_{1}+\alpha_{2}x+\alpha_{3}y+\alpha_{4}z)=0$. 
Then $x(\alpha_{1}+\alpha_{2}x+\alpha_{3}y+\alpha_{4}z)=\alpha_{1}x+\alpha_{2}x+\alpha_{3}y+\alpha_{4}z=0$ implies $\alpha_{3} =\alpha_{4}=0$ 
and $y(\alpha_{1}+\alpha_{2}x)=\alpha_{1}y+\alpha_{2}x=0$ 
implies $\alpha_{1}=\alpha_{2}=0$. 
Thus $S$ can not be right McCoy.\\
Next we show that $S$ is right Central McCoy. 
Let $f(t)=\sum_{i=0}^n (\alpha_{1i}+\alpha_{2i}x+\alpha_{3i}y+\alpha_{4i}z)t^{i}$ and $g(t)=\sum_{j=0}^m (\beta_{1j}+\beta_{2j}x+\beta_{3j}y+\beta_{4j}z)t^{j}$ 
be nonzero in $S[t]$ with $f(t)g(t)=0$. Then $(\alpha_{1i}+\alpha_{2i}x+\alpha_{3i}y+\alpha_{4i}z)z=
(\alpha_{1i}+\alpha_{2i}+\alpha_{3i})z \in C(S)$, 
since $(\alpha_{1i}+\alpha_{2i}+\alpha_{3i})z (\alpha_{1}+\alpha_{2}x+\alpha_{3}y+\alpha_{4}z)=
(\alpha_{1i}+\alpha_{2i}+\alpha_{3i})(\alpha_{1}+\alpha_{2}+\alpha_{3})z=
 (\alpha_{1}+\alpha_{2}x+\alpha_{3}y+\alpha_{4}z)(\alpha_{1i}+\alpha_{2i}+\alpha_{3i})z$ 
for each $(\alpha_{1}+\alpha_{2}x+\alpha_{3}y+\alpha_{4}z) \in S$.
\end{examp}

The following example shows that, Central McCoy condition is not left-right symmetric.
\begin{examp}\label{examp} Let $K$ be a field and 
$A=K<a_0, b_0, a_1, b_1>$ be the free algebra 
by the noncommuting indeterminates 
$a_{0},b_{0},a_{1},b_{1}.$ 
Let $I$ be the ideal of $A$ generated by \begin{center}
$a_{0}b_{0}, a_{0}b_{1}+a_{1}b_{0}, a_{1}b_{1}, b_{s}b_{t}$
\end{center}
 for $s,t \in \{0,1\}$ and let $R= A/I$. 
 We identify $a_{i}$ and $b_{j}$ with their images in $R$ for simplicity.
 By the construction of $R$, 
 we have $(a_{0}+a_{1}x)(b_{0}+b_{1}x)=0$ 
 while $a_{0}+a_{1}x$ and $a_{0}+a_{1}x$ are nonzero polynomials over $R$. 
 Assume by way of contradiction that 
 there exists $0\neq r \in R$ such that $a_{0}r, a_{1}r \in C(R)$. 
 Therefore, $a_0rb_0=b_0a_0r.$ 
 A computation using the reduced forms for elements in $R$ 
 shows that $a_{0}r=0=a_{1}r$, 
 which quickly implies $r=0$, a contradiction. 
 This yields that $R$ is not right Central McCoy. \\
Next we show that $R$ is a  left Central McCoy. 
Let $f(x)$ and $g(x)$ be nonzero in $R[x]$ with 
$f(x)g(x)=0$. 
Then $b_{j}g(x)=0\in C(R)[x]$ for $j=0,1$, by [5, Example 2]. 
Thus $R$ is  left Central McCoy.
\end{examp}

Now, we get some of the basic properties of central McCoy rings. 
\begin{prop}\label{proposition1}
Let $R_{k}$ be a ring, where $k \in I$. Then $R_{k}$ is right (resp. left) Central McCoy for each $k \in I$ if and only if $R=\prod_{k\in I} R_{k}$ is right (resp. left) Central McCoy.
\end{prop}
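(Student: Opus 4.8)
The plan rests on three structural facts about the product $R=\prod_{k\in I}R_{k}$, all routine to verify, which I would record first and use throughout. First, the center splits as $C(R)=\prod_{k\in I}C(R_{k})$, so an element of $R$ is central exactly when each of its components is central in the corresponding factor. Second, a polynomial $f\in R[x]$ is the same datum as a family $(f_{k})_{k\in I}$ with $f_{k}\in R_{k}[x]$, where the $k$-th component of the coefficient $a_{i}$ of $f$ is the $i$-th coefficient of $f_{k}$. Third, $fg=0$ in $R[x]$ if and only if $f_{k}g_{k}=0$ in $R_{k}[x]$ for every $k\in I$.

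For the direction assuming each $R_{k}$ is right Central McCoy, I would take nonzero $f,g\in R[x]$ with $fg=0$. Since $g\neq0$ there is an index $k^{*}$ with $g_{k^{*}}\neq0$, and $f_{k^{*}}g_{k^{*}}=0$. If $f_{k^{*}}\neq0$, I would apply the right Central McCoy property of $R_{k^{*}}$ to the pair $f_{k^{*}},g_{k^{*}}$ to obtain a nonzero $r^{*}\in R_{k^{*}}$ with $a_{i,k^{*}}r^{*}\in C(R_{k^{*}})$ for all $i$; if $f_{k^{*}}=0$, then every coefficient of $f$ vanishes in the $k^{*}$-component and any nonzero $r^{*}\in R_{k^{*}}$ serves (one exists because $g_{k^{*}}\neq0$). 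In either case I would define $r\in R$ to be $r^{*}$ in the $k^{*}$-slot and $0$ elsewhere. Then $r\neq0$, and for each $i$ the element $a_{i}r$ vanishes outside the $k^{*}$-component and equals $a_{i,k^{*}}r^{*}\in C(R_{k^{*}})$ there, so $a_{i}r\in C(R)$ by the first structural fact. This settles the forward implication, and the left-handed version is identical.

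For the converse I would fix $k_{0}\in I$ and a pair of nonzero $p=\sum_i c_i x^i,\,q\in R_{k_{0}}[x]$ with $pq=0$, and write $e$ for the central idempotent of $R$ that is $1$ in the $k_{0}$-slot and $0$ elsewhere. The natural move is to lift: set $F,G\in R[x]$ to be $p,q$ in the $k_{0}$-component and $0$ in every other component, so that $F,G$ are nonzero with $FG=0$, and feed $F,G$ to the Central McCoy property of $R$ to obtain a nonzero $r=(r_{k})\in R$ with $A_{i}r\in C(R)$ for all $i$, where $A_{i}$ is $c_i$ in the $k_0$-slot and $0$ elsewhere. Projecting to the $k_{0}$-slot gives $c_{i}r_{k_{0}}\in C(R_{k_{0}})$ for every $i$, which is exactly the desired conclusion for $R_{k_{0}}$ \emph{provided} $r_{k_{0}}\neq0$. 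This proviso is precisely where I expect the real difficulty to sit, and I would scrutinize it before trusting the converse: the witness $r$ produced by $R$ is only guaranteed nonzero \emph{somewhere}, and because $F$ and $G$ are supported on the single slot $k_{0}$, the condition $A_{i}r\in C(R)$ imposes no constraint whatsoever on the other components of $r$, so nothing in the lift forces $r_{k_{0}}\neq0$. The danger is concrete rather than cosmetic: if some other factor $R_{k_{1}}$ is commutative, then the element with $1$ in slot $k_{1}$ and $0$ elsewhere is a \emph{universal} nonzero Central McCoy witness for $R$, so $R$ can be right Central McCoy while $R_{k_{0}}$ is not. Overcoming this would require reshaping the lift so that every admissible witness is forced to act nontrivially in the $k_{0}$-component, or else imposing extra hypotheses on the remaining factors; I would treat this support-control problem as the crux of the argument.
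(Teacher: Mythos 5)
Your forward direction is correct and is essentially the paper's own argument: locate a slot $k^{*}$ where $g$ survives, extract a witness there (or take any nonzero element of $R_{k^{*}}$ if $f$ vanishes in that slot), embed it in the product, and use $C\bigl(\prod_{k}R_{k}\bigr)=\prod_{k}C(R_{k})$. The paper organizes the case split slightly differently (it first asks whether $f$ dies entirely in some component), but the content is identical.

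For the converse you stop short of a proof, and rightly so: the obstruction you isolate is genuine, and it is exactly the gap in the paper's own argument. The paper lifts $f,g\in R_{t}[x]$ to polynomials supported in the $t$-th slot, invokes the Central McCoy property of $R$ to get a nonzero $c=(c_{k})$ with $(0,\ldots,a_{i},\ldots,0)c\in C(R)$, and then concludes $a_{i}c_{t}\in C(R_{t})$ ``and so $R_{t}$ is right Central McCoy'' --- silently assuming $c_{t}\neq 0$, which nothing in the lift forces. Your ``concrete danger'' in fact occurs inside the paper itself: for a field $F$, the ring $M_{2}(F)\times F$ is right Central McCoy, because $r=(0,1)$ is a universal witness (for any coefficient $A_{i}=(u_{i},v_{i})$ one has $A_{i}r=(0,v_{i})\in C(M_{2}(F))\times F=C\bigl(M_{2}(F)\times F\bigr)$, the second factor being commutative), yet $M_{2}(F)$ is not right Central McCoy by Example 2.8. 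So the ``if'' half of the proposition is false as stated, and the support-control problem you flag cannot be repaired without changing the statement (e.g.\ restricting to finitely many factors does not help either; one genuinely needs an extra hypothesis such as no factor admitting a universal central witness). Your analysis is more careful than the paper's here.
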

\begin{proof}
Let each $R_{k}$ be a right Central McCoy ring and 
$f(x)= \sum_{i=0}^m a_{i}x^{i}, 
g(x)= \sum_{j=0}^n b_{j}x^{j} \in R[x]\setminus\{0\}$ 
such that $f(x)g(x)=0$, where $a_{i}=(a_{i}^{~(k)})$, 
$b_{j}=(b_{j}^{~(k)})$. 
If there exists $t\in I$ such that $a^{(t)}_{i}=0$ 
for each $0\leq i \leq m$, 
then we have $a_{i}c=0 \in C(R)$ 
where $c=(0,0,\ldots, 1_{R_{t}},0,\ldots,0)$. 
Now suppose for each $k\in I$, 
there exists $0\leq i_{k} \leq m$ 
such that $a^{(k)}_{i_{k}}\neq 0$. 
Since $g(x)\neq 0$, there exists $t\in I$ 
and $0\leq j_{t} \leq n$ such that $ b^{(t)}_{j_{t}}\neq 0$. 
Consider $f_{t}(x)=\sum_{i=0}^m a^{(t)}_{i}x^{i}$ 
and $g_{t}(x)=\sum_{i=0}^n b^{(t)}_{j}x^{j} \in R_{t}[x]\setminus\{0\}$. 
We have $f_{t}(x)g_{t}(x)=0$. 
Thus there exists nonzero $c_{t}\in R_{t}$ 
such that $a^{(t)}_{i}c_{t}\in C(R_{t})$, 
for each $0\leq i \leq m$, since $R_{t}$ is right Central McCoy ring. 
Therefore $a_{i}c\in C(R)$, for each $0\leq i \leq m$, 
where $c=(0,\ldots,0,c_{t},0,\ldots,0)$ and so $R$ is right Central McCoy.\\
Conversely, suppose $R$ is right Central McCoy and $t\in I$. 
Let $f(x)= \sum_{i=0}^m a_{i}x^{i}, 
g(x)= \sum_{j=0}^n b_{j}x^{j}$ be nonzero polynomials in $R_{t}[x]$ 
such that $f(x)g(x)=0$. 
Let $$F(x)=\sum_{i=0}^m (0,0,\ldots,0,a_{i},0,\ldots,0)x^{i},~G(x)=\sum_{j=0}^n (0,0,\ldots,0,b_{j},0,\ldots,0)x^{j} \in R[x]\setminus\{0\}.$$ 
Hence $F(x)G(x)=0$ and so there exists 
$0\neq c=(c_{1},c_{2},\ldots,,c_{t-1},c_{t},c_{t+1},\ldots,c_{m-1},c_{m})$ 
such that $(0,0,\ldots,0,a_{i},0,\ldots,0)c\in C(R)$. 
Therefore,  $a_{i}c_{t}\in C(R_{t})$ and so $R_{t}$ is right Central McCoy.
\end{proof}

\begin{cor}\label{Corollary}
Let $D$ be a ring and $C$ a subring of $D$ with $1_{D}\in C$. 
Let  
\begin{center}
$R(C,D) = \{(d_{1}, \ldots,d_{n}, c, c, \ldots) \mid d_{i} \in D, c \in C, n\geq1\}$ 
\end{center} with addition and multiplication defined component-wise, 
$R(D,C)$ is a ring. 
Then $D$ is right (resp. left) Central McCoy if and only if 
$R(D,C)$ is right (resp. left) Central McCoy.
\end{cor}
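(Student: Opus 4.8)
The plan is to prove both implications by reducing everything to coordinatewise equations over $D$, exactly in the spirit of Proposition \ref{proposition1}, while taking care that the witnesses produced still lie in $R(C,D)$. The essential observation is that although a general element of $R(C,D)$ is a sequence which is eventually a constant of $C$, every single coordinate of such a sequence is an element of $D$ (since $C\subseteq D$). Consequently, if $f(x)=\sum_i a_i x^i$ and $g(x)=\sum_j b_j x^j$ are nonzero in $R(C,D)[x]$ with $f(x)g(x)=0$, then writing $a_i=(a_i^{(k)})_k$ and $b_j=(b_j^{(k)})_k$ we obtain, for every coordinate $k$, polynomials $f_k(x)=\sum_i a_i^{(k)}x^i$ and $g_k(x)=\sum_j b_j^{(k)}x^j\in D[x]$ with $f_k(x)g_k(x)=0$.

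For the forward implication, suppose $D$ is right Central McCoy. I would argue by the same case split as in Proposition \ref{proposition1}. If some coordinate $k$ has $f_k=0$, take $r$ to be the element of $R(C,D)$ with $1_D$ in position $k$ and $0$ elsewhere. If instead $f_k\neq 0$ for every $k$, then since $g\neq 0$ there is a coordinate $t$ with $g_t\neq 0$, and the right Central McCoy property of $D$ applied to $f_t(x)g_t(x)=0$ yields a nonzero $c_t\in D$ with $a_i^{(t)}c_t\in C(D)$ for all $i$; take $r$ to be the element with $c_t$ in position $t$ and $0$ elsewhere. In either case $r$ is supported on a single coordinate and has zero tail, so $r\in R(C,D)$ and $r\neq 0$, while each product $a_i r$ is supported on that one coordinate with value lying in $C(D)$. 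Since a sequence supported on a single coordinate is central in $R(C,D)$ as soon as its nonzero entry is central in $D$, we get $a_i r\in C(R(C,D))$ for all $i$, proving that $R(C,D)$ is right Central McCoy.

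For the converse I would not re-run the coordinate argument but instead appeal to Proposition \ref{proposition1} directly. The element $e=(1_D,0,0,\dots)$ is a central idempotent of $R(C,D)$, and the Peirce decomposition splits the ring as $R(C,D)\cong eR(C,D)\times (1-e)R(C,D)$, where $eR(C,D)=\{(d,0,0,\dots):d\in D\}\cong D$. Hence, if $R(C,D)$ is right Central McCoy, then Proposition \ref{proposition1}, applied to this product of two rings, forces each factor, and in particular $D\cong eR(C,D)$, to be right Central McCoy. The left-handed statements follow by the symmetric arguments.

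I expect the main obstacle to be the bookkeeping at the ``tail'' of the sequences, namely ensuring that the witness $r$ genuinely belongs to $R(C,D)$ and that $a_i r$ is central in $R(C,D)$ rather than merely central coordinatewise. Both issues are resolved by the same device: choosing $r$ to be supported on a single coordinate with zero tail. Such an $r$ is automatically eventually constant in $C$ (its tail is $0\in C$), and its products $a_i r$, being supported on one coordinate, lie in $C(R(C,D))$ precisely when that coordinate lies in $C(D)$ --- a condition supplied entirely by the Central McCoy property of $D$. In particular no hypothesis on the subring $C$ beyond $1_D\in C$ is needed, since the relevant coordinate of each polynomial is read as an element of $D$, not merely of $C$.
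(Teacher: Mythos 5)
Your proposal is correct. The paper states this result as a corollary of Proposition~2.4 and supplies no proof of its own, and your forward direction is exactly the coordinatewise adaptation the paper evidently intends: the only point requiring care beyond Proposition~2.4 is that the witness must lie in $R(C,D)$, and you handle this correctly by taking $r$ supported on a single coordinate (so its tail is $0\in C$) and observing that such an $r$ with central entry is central in $R(C,D)$. Your converse, via the central idempotent $e=(1_D,0,0,\ldots)$ and the splitting $R(C,D)\cong eR(C,D)\times(1-e)R(C,D)$ with $eR(C,D)\cong D$, is a slightly different but entirely legitimate route; it delegates the ``which coordinate is the witness nonzero at'' issue to the already-established product proposition (and is also consistent with the paper's later proposition on central idempotents), whereas a direct re-embedding of $D[x]$ into $R(C,D)[x]$ would have to address that point by hand.
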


\begin{thm}\label{theorem0}
For a ring $R$, $R[x]$ is right (resp. left) Central McCoy 
if and only if $R$ is right (resp. left) Central McCoy.
\end{thm}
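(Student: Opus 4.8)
The plan is to prove both implications by passing through the ring $R[x][y]$ and reading off coefficients, using repeatedly the elementary fact that the center of a polynomial ring consists of the polynomials with central coefficients, i.e. $C(R[x]) = C(R)[x]$: a polynomial commutes with every constant $r \in R$ and with $x$ precisely when each of its coefficients lies in $C(R)$. I will treat only the right-handed case, the left-handed statement being entirely symmetric.

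For the direction ``$R[x]$ right Central McCoy $\Rightarrow R$ right Central McCoy'', I start from nonzero $f = \sum_{i=0}^n a_i x^i$ and $g = \sum_{j=0}^m b_j x^j$ in $R[x]$ with $fg = 0$. The idea is to reread the same coefficient sequences as polynomials in a fresh central variable $y$: put $F(y) = \sum_i a_i y^i$ and $G(y) = \sum_j b_j y^j$ in $R[x][y]$, with coefficients $a_i, b_j \in R \subseteq R[x]$. Since the relation $fg = 0$ is exactly the system $\sum_{i+j=s} a_i b_j = 0$, the same computation yields $F(y)G(y) = 0$ with $F, G \neq 0$. Applying the right Central McCoy property of $R[x]$ produces a nonzero $h = \sum_k h_k x^k \in R[x]$ with $a_i h \in C(R[x]) = C(R)[x]$ for every $i$. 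Reading off the coefficient of $x^k$ in $a_i h = \sum_k (a_i h_k) x^k$ shows $a_i h_k \in C(R)$ for all $i$ and all $k$; choosing any $k$ with $h_k \neq 0$ and setting $r = h_k \in R$ gives the required nonzero $r$ with $a_i r \in C(R)$.

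For the converse ``$R$ right Central McCoy $\Rightarrow R[x]$ right Central McCoy'', the key device is evaluation at a high power of $x$. Suppose $F(y) = \sum_i f_i(x) y^i$ and $G(y) = \sum_j g_j(x) y^j$ are nonzero in $R[x][y]$ with $F(y)G(y) = 0$. Since $x$ is central in $R[x]$, the evaluation $y \mapsto x^k$ is a ring homomorphism $R[x][y] \to R[x]$, so $f := F(x^k)$ and $g := G(x^k)$ satisfy $fg = (FG)(x^k) = 0$. Choosing $k$ larger than the degrees of all the $f_i$ and $g_j$, the factors $x^{ki}$ push the supports of the summands $f_i(x)\,x^{ki}$ into disjoint blocks, so the coefficients of $f$ are exactly the coefficients of the individual $f_i$ with no cancellation, and in particular $f, g \neq 0$. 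Now $R$ being right Central McCoy yields a nonzero $r \in R$ with $c\,r \in C(R)$ for every coefficient $c$ of $f$, hence, writing $f_i = \sum_l a_{il} x^l$, we get $a_{il} r \in C(R)$ for all $i, l$; therefore $f_i r = \sum_l (a_{il} r)\,x^l \in C(R)[x] = C(R[x])$. Taking $H = r \in R \subseteq R[x]$ supplies the nonzero central multiplier required by the definition for $R[x]$.

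The routine parts are the center computation and the coefficient bookkeeping in both constructions; the one genuinely load-bearing idea is the evaluation $y \mapsto x^k$ in the converse, where I must choose $k$ large enough to prevent coefficient collisions, so that applying the property of $R$ to the single polynomial $f$ controls all the coefficient blocks $f_i$ simultaneously, and so that the resulting multiplier, a priori only guaranteed to lie in $R$, is exactly the element of $R \subseteq R[x]$ that the definition demands.
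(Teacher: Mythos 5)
Your proposal is correct and takes essentially the same route as the paper's own proof: for one direction you reread $f,g$ as polynomials in a new variable over $R[x]$ and extract a nonzero coefficient of the resulting multiplier via $C(R[x])=C(R)[x]$, and for the other you substitute $y\mapsto x^{k}$ with $k$ large enough to keep the coefficient blocks disjoint, exactly as the paper does (it merely picks $k$ to be the sum of all the degrees rather than a bound on the maximum). Your write-up is in fact cleaner than the paper's at the two points where its notation slips (the ``constant polynomial'' phrasing and the unexplained $a_i$ in the forward direction).
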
\label{theorem1}
\begin{proof}
Suppose that $R$ is right Central McCoy. 
Let $R[x][t]$ denote the polynomial ring 
with an indeterminate t over $R[x]$. 
Let $f(t)g(t)=0$ for nonzero polynomial 
$f(t)=\sum_{i=0}^m f_{i}(x)t^{i}~,~ g(t)=\sum_{j=0}^n g_{j}(x)t^{j} \in R[x][t].$ Let $k=\sum_{i=0}^m deg f_{i}(x) + \sum_{j=0}^n degg_{j}(x).$ 
The degree of the zero polynomial is taken to be zero. 
Let $F(x)=f_{0}+f_{1}x^{k}+\ldots+f_{m}x^{k{m}}$,  $G(x)=g_{0}+g_{1}x^{k}+\ldots+g_{n}x^{k{n}}\in R[x]\setminus\{0\}.$ 
So $F(x)G(x)=0$ since $f(t)g(t)=0$. 
Then there exists $c\neq0 \in R$ such  that  
$a_{i}c\in C(R)$ for all $0\leq i \leq m$, 
and so $f_{i}(x)c\in C(R[x]).$\\
Conversely,  suppose  that $R[x]$  is  right Central  McCoy. 
Let $f(x)g(x)=0$ ~for ~nonzero \\
polynomial~$f(x) =\sum_{i=1}^n a_{i}x^{i}$ 
and  $g(x)=\sum_{j=1}^m b_{j}x^{j} \in R[x]\setminus\{0\}.$ 
Suppose $f(t)$ and $g(t)$ is constant polynomial in $R[x][t]$ 
such that $f(t)g(t)=0$. 
Then there exists $0\neq c(x) = c_{0}+c_{1}x+...+c_{k}x^{k}\in R[x]$ 
such that $f(t)c(x) \in C(R[x])$. 
So $a_{i}c(x)\in C(R[x])$. 
Since $c(x)$ is a nonzero element, 
then at least one of the $c_{i}$' is nonzero element, 
for example $c_{t}$. So $a_{i}c_{t}\in C(R)$. 
Therefore $R$ is right Central McCoy.
 \end{proof}
 
\begin{prop}\label{proposition}
Let $R$ be a ring and $e$ a central idempotent element of $R$. 
Then $R$ is a right (resp. left) Central McCoy ring if and only if 
$eR$ is a right (resp. left) Central McCoy ring if and only if 
$(1-e)R$ is right (resp. left) Central McCoy.
\end{prop}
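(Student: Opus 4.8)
The plan is to reduce the statement to the direct-product result already in hand, namely Proposition \ref{proposition1}. The starting observation is the standard fact that a central idempotent $e$ splits $R$ as an internal direct sum $R = eR \oplus (1-e)R$ of two-sided ideals, each of which is itself a ring with identity ($e$ serving as the identity of $eR$ and $1-e$ that of $(1-e)R$). Because $e$ lies in $C(R)$, multiplication is componentwise across this splitting, so $r \mapsto (er,\,(1-e)r)$ is a ring isomorphism $R \xrightarrow{\ \sim\ } eR \times (1-e)R$. First I would record this isomorphism carefully, checking in particular that it carries products to products precisely because $e$ is central (if $e$ were merely idempotent this would fail).

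The second step is to match up the centers, since the whole question concerns the \emph{Central} McCoy property. The compatibility I would verify is that the center decomposes along the same splitting, i.e. $C(R) = C(eR) \times C((1-e)R)$, or equivalently $C(eR) = e\,C(R)$ and $C((1-e)R) = (1-e)\,C(R)$. The inclusion $e\,C(R) \subseteq C(eR)$ is immediate, and for the reverse one uses that $eR$ and $(1-e)R$ annihilate one another together with $e \in C(R)$ to show an element of $eR$ is central in $eR$ exactly when it is central in $R$. Granting this, a polynomial identity $f(x)g(x)=0$ over $R$ translates coordinatewise into identities over $eR$ and over $(1-e)R$, while a membership $a_i r \in C(R)$ translates into the pair of memberships in $C(eR)$ and $C((1-e)R)$; this is the hinge that lets the ``Central'' condition pass through the product.

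With the isomorphism and the center decomposition established, I would apply Proposition \ref{proposition1} to the two-element family $\{eR,\,(1-e)R\}$: the product $eR \times (1-e)R$ is right Central McCoy if and only if each factor is right Central McCoy. Transporting this back along $R \cong eR \times (1-e)R$ yields the asserted equivalences relating $R$, $eR$ and $(1-e)R$, exactly in the form that the direct-product reduction produces (this mirrors the McCoy analogue from [2] recalled in the introduction). The left-sided version is identical after replacing the condition $a_i r \in C(R)$ by $r b_j \in C(R)$ throughout, so no separate argument is required, and I would note the degenerate cases $e=0$ and $e=1$, where one factor is the zero ring and the corresponding side of the statement is vacuous. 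The only genuine obstacle is the center bookkeeping of the second step; once $C(R) = C(eR) \times C((1-e)R)$ is confirmed, everything else is a direct appeal to Proposition \ref{proposition1}.
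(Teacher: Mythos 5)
Your proposal is correct in substance but takes a genuinely different route from the paper. The paper argues directly with elements: for the forward direction it takes $f(x)g(x)=0$ in $eR[x]$, regards it as a relation in $R[x]$, obtains $s\in R$ with $ea_{i}s\in C(R)$, and checks that $ea_{i}(es)\in C(eR)$; for the converse it multiplies a relation $f(x)g(x)=0$ in $R[x]$ by $e$ and pulls a witness back from $eR$. Your reduction to Proposition \ref{proposition1} via $R\cong eR\times(1-e)R$, together with the center identification $C(eR)=eC(R)$ (which does hold, exactly by the argument you sketch: $e$ central plus $eR\cdot(1-e)R=0$), is cleaner and, importantly, absorbs the degenerate cases that the paper's computation glosses over. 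In the paper's forward direction nothing guarantees that the witness $es$ is nonzero (if $s\in(1-e)R$ one gets $es=0$, which does not certify the Central McCoy property of $eR$), and in its converse the polynomials $ef(x)$ and $eg(x)$ may vanish, in which case the hypothesis on $eR$ cannot be invoked at all. The case analysis already built into Proposition \ref{proposition1} repairs both points, so your route is the more robust one.

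One caveat you should make explicit: what the product decomposition actually delivers is ``$R$ is right Central McCoy if and only if \emph{both} $eR$ and $(1-e)R$ are right Central McCoy.'' The literal chain of equivalences in the statement would additionally force ``$eR$ right Central McCoy $\Leftrightarrow$ $(1-e)R$ right Central McCoy,'' which is false in general: take $R=F\times M_{2}(F)$ for a field $F$ and $e=(1,0)$, so that $eR\cong F$ is Central McCoy while $(1-e)R\cong M_{2}(F)$ is not (by the paper's example with $f(x)=E_{12}+E_{11}x$ and $g(x)=E_{12}-E_{22}x$), whence $R$ itself is not Central McCoy by Proposition \ref{proposition1}. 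This is a defect of the statement rather than of your argument --- the paper's own converse claims to deduce that $R$ is right Central McCoy from the hypothesis on $eR$ alone, and suffers from the same problem --- but you should record the conclusion in the conjunction form that your reduction actually proves.
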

\begin{proof}
Assume that $R$ is a right Central McCoy ring and consider 
$f(x)= \sum_{i=0}^n ea_{i}x^{i}$,
 $g(x)= \sum_{j=0}^m eb_{j}x^{j} \in eR[x]\setminus\{0\} \subseteq R$ 
 such that $f(x)g(x)=0$. 
 Since $R$ is a right Central McCoy ring, 
 there exists $s \in R$ such that $(ea_{i}) s  \in C(R)$. 
 So $(ea_{i}) sr=r(ea_{i}) s$, for any $r \in R$. 
 Therefore $((ea_{i})(es))(er)=er(ea_{i})(es))$. 
 So $(ea_{i})(es) \in C(eR)$. 
 Hence $eR$ is right Central McCoy. 
 Similarly, we prove that $(1-e)R$ is a right Central McCoy ring.\\
 Conversely, assume that ~$eR$ is a right Central McCoy ring. 
 Consider ~$f(x)= \sum_{i=0}^n a_{i}x^{i}$, 
 $g(x)= \sum_{j=0}^m b_{j}x^{j} \in R[x]\setminus\{0\}$ such that $f(x)g(x)=0$. 
 Clearly $ef(x), eg(x) \in eR[x]$ and 
 $(ef(x))(eg(x))=e^{2}f(x)g(x)=ef(x)g(x)=0$, 
 since $e$ is a central idempotent element of $R$. 
 Then there exists $s \in eR$ such that $(ea_{i})s \in C(eR)$. 
 So $(ea_{i})ser = er(ea_{i})s$, for any $r \in R$. 
 Therefore $(a_{i}se)r=r(a_{i}es)$. 
 So $a_{i}s \in C(R)$. 
 Hence, $R$ is right Central McCoy. 
 Similarly, this fact is satisfied if $(1-e)R$ is a right Central McCoy ring.
\end{proof}
The following example shows that, 
if $R$ is a right Central McCoy ring, 
then $M_{n}(R)$ and $T_{n}(R)$ are  not necessary right Central McCoy.

\begin{examp}\label{examp} Let ~$R$~be~ a~ commutative ring, 
then R is right Central McCoy ring. 
Let $f(x)=E_{12}+E_{11}x,  g(x)=E_{12}-E_{22}x \in (M_{2}(R))[x]$. 
Then~ $f(x)g(x)=0$. 
If there exists $P=\sum_{i=1}^2 \sum_{j=1}^2P_{ij}E_{ij} \in M_{2}(R)$ 
such that $f(x)P \in C(M_{2}(R))$, 
then $E_{12}P=P_{21}E_{11}+P_{22}E_{12} \in C(M_{2}(R))$. 
Therefore, $(P_{21}E_{11}+P_{22}E_{12})E_{12}=E_{12}(P_{21}E_{11}+P_{22}E_{12})$ 
and so $P_{21}= 0.$ 
Also $(P_{21}E_{11}+P_{22}E_{12})E_{22}=E_{22}(P_{21}E_{11}+P_{22}E_{12})$~ 
and so $P_{22}= 0.$ 
Similarly, $P_{11}= P_{12} = 0$. 
Therefore $P=0$. By choosing $f(x)$ and 
$g(x)$ the same as above and by similar argument, 
we can show that $T_{2}(R)$ is not right Central McCoy.
\end{examp}
 
\begin{thm}\label{theorem0}
For a ring $R$, we have $R$ is right (resp. left) Central McCoy 
if and only if one of the following holds:\\
(1) $ D_{n}(R)= \bigg\{ \left(\begin{array}{ccccc}
a & a_{12} & a_{13} & \ldots & a_{1n} \\  0 & a & a_{23} & \ldots & a_{2n} \\  0 & 0 & a & \ldots & a_{3n} \\  \vdots & \vdots & \vdots & \ddots & \vdots \\  0 & 0 & 0 & \ldots & a
\end{array}\right)| a, a_{ij} \in R\bigg\}$ is right (resp. left) Central McCoy for any $n\geq1.$\\
(2) $V_{n}(R)= \bigg\{ \left(\begin{array}{cccccc}
 a_{1} & a_{2} & a_{3} & a_{4} & \ldots & a_{n} \\
    0 & a_{1} & a_{2} & a_{3}& \ldots & a_{n-1} \\
    0 & 0 & a_{1} & a_{2} & \ldots & a_{n-2} \\
    \vdots & \vdots & \vdots & \vdots & \ldots & \vdots \\
    0 & 0 & 0 & 0 & \ldots & a_{2} \\
    0 & 0 & 0 & 0 & \ldots & a_{1} \\
\end{array}\right) | a_{1}, a_{2}, . . . , a_{n}\in R\bigg\}\cong \frac{R[x]}{(x^{n})}$ 
is right (resp. left) Central McCoy for any $n\geq1,$ 
where $(x^{n})$ is a two- sided ideal of $R[x]$ generated by $x^{n}.$
\end{thm}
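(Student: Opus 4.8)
The plan is to prove the three conditions equivalent, with the reverse implications being immediate: taking $n=1$ we have $D_1(R)=V_1(R)=R$, so if $D_n(R)$ (resp. $V_n(R)$) is right Central McCoy for every $n\ge 1$, then in particular $R$ is. The substance is the forward direction, and I would treat $D_n(R)$ and $V_n(R)$ in parallel, since they share the same structural features. In both rings there is a surjective ring homomorphism $\phi$ onto $R$: for $A\in D_n(R)$ let $\phi(A)$ be the common diagonal entry, and for $A=\sum_k a_k x^k\in V_n(R)\cong R[x]/(x^n)$ let $\phi(A)=a_0$; in each case $\phi$ is multiplicative because the diagonal (resp. constant term) of a product is the product of the diagonals (resp. constant terms). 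Each ring also carries a distinguished ``corner'' element $w$ --- namely $w=E_{1n}$ in $D_n(R)$ and $w=x^{n-1}$ in $V_n(R)$ --- with two properties I would establish by direct computation: first, $Aw=\phi(A)\,w$ for every $A$ (right multiplication by $w$ collapses $A$ to its diagonal placed in the corner); second, $cw\in C(D_n(R))$ (resp. $C(V_n(R))$) whenever $c\in C(R)$. I would verify the second point by checking that $cw$ commutes with an arbitrary element, which reduces to $c$ commuting with a single entry of $R$.

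With this setup in hand, take nonzero $F(y)=\sum_p A_p y^p$ and $G(y)=\sum_q B_q y^q$ in the associated polynomial ring with $F(y)G(y)=0$, and set $f=\phi\circ F=\sum_p \phi(A_p)y^p\in R[y]$. If $f=0$ --- i.e. every $A_p$ has zero diagonal (resp. zero constant term) --- then $A_p w=\phi(A_p)w=0$ for all $p$, so $r=w\neq 0$ already witnesses $A_p r=0\in C$. Otherwise $f\neq 0$, and the key step is to manufacture a nonzero $h\in R[y]$ with $fh=0$ directly from $F(y)G(y)=0$. Let $v$ be the smallest superdiagonal distance (resp. smallest power of $x$) occurring with a nonzero coefficient in $G$, and pick a position (resp. coefficient) at which this first nonzero entry, call it $h\in R[y]$, appears. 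Reading off the corresponding entry (resp. $x^{v}$-coefficient) of the equation $F(y)G(y)=0$ and using that $G$ has no lower terms, all cross terms drop out and the relation collapses to $f\,h=0$ with $h\neq 0$. Now applying the hypothesis that $R$ is right Central McCoy to the nonzero polynomials $f,h\in R[y]$ yields a nonzero $c\in R$ with $\phi(A_p)c\in C(R)$ for every $p$. Taking $r=cw$ then gives $A_p r=\phi(A_p)\,c\,w\in C$ and $r\neq 0$, which is exactly the right Central McCoy condition; the left-handed statements follow by the symmetric argument.

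I expect the main obstacle to be the extraction of the nonzero annihilating polynomial $h$ in the case $\phi\circ G=0$ (when $G$ itself has vanishing diagonal or constant term), since then the naive projection $\phi(F)\phi(G)=0$ is vacuous. The remedy is to descend to the lowest nonvanishing layer of $G$ under the nilpotent filtration ($J^v$ in $D_n(R)$, $(x^v)$ in $V_n(R)$) and exploit that $G$ has nothing below it, which forces the cross terms in the relevant coordinate of $F(y)G(y)=0$ to vanish and isolates $f\,h=0$. A secondary point requiring care is pinning down just enough of the center --- specifically that $C(R)\cdot w$ lies in $C(D_n(R))$ (resp. $C(V_n(R))$) --- so that the constructed element $r=cw$ genuinely produces central products $A_p r$; I would record this as a short computation rather than determine the full center.
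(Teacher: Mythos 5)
Your forward direction is essentially the paper's own argument, just packaged more uniformly: the paper also projects onto the diagonal polynomial $f(x)$, splits into the cases $f\neq 0,\ g\neq 0$ (read off the $(1,1)$ entry), $f\neq 0,\ g=0$ (descend to a bottommost nonzero entry of $G$ so the cross terms vanish and $f\,g_{kl}=0$), and $f=0$ (take $E_{1n}$ directly), and it uses exactly your witness $r E_{1n}$ with the observation that $c E_{1n}$ is central when $c\in C(R)$. Your formulation via the homomorphism $\phi$, the corner element $w$, and the nilpotent filtration is a slightly cleaner way to organize the same computation, and it has the virtue of treating $D_n(R)$ and $V_n(R)$ genuinely in parallel, whereas the paper proves (1) and dismisses (2) with ``the proof is similar.'' The one place you diverge is the converse: you dispose of it by the degenerate case $n=1$, where $D_1(R)=V_1(R)=R$, which is legitimate under the theorem's quantifier ``for any $n\geq 1$'' but proves less than the paper does. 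The paper's converse embeds a zero-dividing pair $f,g\in R[x]$ into $D_n(R)[x]$ (via matrices all of whose upper-triangular entries equal $a_i$) and extracts a central witness in $R$ from one in $D_n(R)$, thereby showing that $D_n(R)$ being right Central McCoy for a \emph{single} fixed $n$ already forces $R$ to be right Central McCoy; your shortcut does not yield that stronger statement. Both routes are correct for the theorem as stated.
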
\label{theorem1}

\begin{proof}
(1) Let $F(x)=\sum_{i=0}^p A_{i}x^{i}, G(x)=\sum_{j=0}^q B_{j}x^{j} \in D_{n}(R)[x] $ 
where,
$$A_{i}=\left(\begin{array}{cccc}
a_{i} & a^{i}_{12}& \ldots & a^{i}_{1n} \\  0 & a_{i} &\ldots & a^{i}_{2n} \\  \vdots & \vdots & \ddots & \vdots \\ 0 & 0& \ldots & a_{i}
\end{array}\right), B_{j}=\left(\begin{array}{cccc}
b_{j} & b^{j}_{12}& \ldots & b^{j}_{1n} \\   0 & b_{j} &\ldots & b^{j}_{2n} \\  \vdots & \vdots & \ddots & \vdots \\ 0 & 0& \ldots & b_{j}
\end{array}\right)$$
Then
\begin{center}
$F(x)=\left(\begin{array}{cccc}
f(x) & f_{12}(x)& \ldots &f_{1n}(x) \\   0 & f(x) &\ldots & f_{2n}(x) \\  \vdots & \vdots & \ddots & \vdots \\0 & 0& \ldots & f(x)
\end{array}\right), G(x)=\left(\begin{array}{cccc}
 g(x) & g_{12}(x)& \ldots &g_{1n}(x) \\ 0 & g(x) &\ldots & g_{2n}(x) \\   \vdots & \vdots & \ddots & \vdots \\ 0 & 0& \ldots & g(x)
\end{array}\right)$
\end{center}
where $f(x)=\sum_{i=0}^pa_{i}x^{i}$, $f_{kl}(x)=\sum_{i=0}^pa^{i}_{kl}x^{i}$, $g(x)=\sum_{j=0}^qb_{j}x^{j}$ 
and $g_{kl}(x)=\sum_{j=0}^qb^{j}_{kl}x^{j}$ 
for any $k=1,2,...,n$, $l=2,3,...,n$ and~$ k<l$. 
Suppose $F(x)G(x)=0$, and $F(x),G(x)\neq0$. 
Set $H(x)=F(x)G(x)=(h_{pq}(x))$ for~~$ p,q=1,2,...,n$.
\\
CASE 1. If $f(x)\neq0$, $g(x)\neq0$, 
then $h_{11}(x)=f(x)g(x)=0$. 
Since $R$ is right Central McCoy there exists $r\in R\setminus\{0\}$ 
such that $a_{i}r\in C(R)$. Let $A=rE_{1n}$. 
Then $A_{i}C\in C(D_{n}(R))$.
\\
CASE 2.~If $f(x)\neq0$ and $g(x)=0$, then there exists $g_{kl}(x)\neq0$, such that $g_{(k+u)l}(x)=0$ for some $k,l$, and $1 \leq u \leq n-k$, since $G(x)\neq0$. So~$ h_{kl}(x)=f(x)g_{kl}(x)=0$. Hence there exists $r\in R\setminus\{0\}$ such that $a_{i}r\in C(R)$. Let $A=rE_{1n}$. Then $A_{i}A\in C(D_{n}(R))$.
\\
CASE 3. If $f(x)=0$, then clearly $A_{i}A=0$, where $A=E_{1n}$. Thus $D_{n}(R)$ is right Central McCoy.
\\
Conversely, assume that $f(x)g(x)=0$ , where 
$f(x)=\sum_{i=0}^na_{i}x^{i}$, $g(x)=\sum_{j=0}^mb_{j}x^{j}$ are nonzero polynomial of R[x]. Let $F(x)=\sum_{i=0}^nA_{i}x^{i}$  ,  $G(x)=\sum_{j=0}^mB_{j}x^{j}$, where 
$$A_{i}=\left(\begin{array}{cccc}
 a_{i} & a_{i}& \ldots & a_{i} \\ 0 & a_{i} &\ldots & a_{i} \\ \vdots & \vdots & \ddots & \vdots \\ 0 & 0& \ldots & a_{i}
\end{array}\right), B_{j}=\left(\begin{array}{cccc}
 b_{j} & b_{j}& \ldots & b_{j} \\ 0 & b_{j} &\ldots & b_{j} \\ \vdots & \vdots & \ddots & \vdots \\ 0 & 0& \ldots & b_{j}
\end{array}\right)$$ for any $i=0,1,...,n,j=0,1,...,m$. Then 
$$F(x)G(x)=\left(\begin{array}{cccc}
 f(x) & f(x)& \ldots & f(x) \\ 0 & f(x) &\ldots &f(x) \\ \vdots & \vdots & \ddots & \vdots \\ 0 & 0& \ldots & f(x)
\end{array}\right)\left(\begin{array}{cccc}
g(x) & g(x)& \ldots & g(x) \\0 & g(x) &\ldots & g(x) \\ \vdots & \vdots & \ddots & \vdots \\  0 & 0& \ldots & g(x)
\end{array}\right)=0.$$
Hence there exists $A$=$\left(\begin{array}{cccc}
s & s_{12}& \ldots & s_{1n} \\ 0 & s &\ldots &s_{2n} \\ \vdots & \vdots & \ddots & \vdots \\  0 & 0& \ldots & s
\end{array}\right) \in D_{n}(R)\setminus\{0\}$ such that $A_{i}A\in C(D_{n}(R))$, since $D_{n}(R)$ is right Central McCoy. If $s\neq0$, then $a_{i}s\in C(R)$. If $s=0$, then there exists $s_{ij}\neq 0$ for some $i,j$, such that $s_{i+v,j}=0$ for any $ 1 \leq v \leq n-i$. We also have $a_{i}s_{ij}\in C(R)$. Thus, $R$ is right Central McCoy.\\
(2) The proof is similar to (1). 
\end{proof}

\begin{rem}\label{rem1} It is natural to ask whether $R$ is a right (resp. left) Central McCoy ring if for any nonzero proper ideal $I$ of $R$, $R/I$ and I are right (resp. left) Central McCoy, where $I$ is considered as a right (resp. left) Central McCoy ring without identity. However, we have a negative answer to this question by the following example:
\end{rem}

\begin{examp}\label{examp1} Let  F be a field and consider $R$=$T_{2}(F)$, which is not right Central McCoy by Example 2.8. But by [8, Example 3.2] $R/I$ and $I$ are McCoy and therefore are Central McCoy. 
\end{examp}

Let $S$ denote a multiplicatively closed subset of a ring $R$ consisting of central regular elements. Let $RS^{-1}$ be the localization of $R$ at $S$. Then we have:

\begin{thm}\label{theorem0} A ring $R$ is right (resp. left) Central McCoy if and only if $RS^{-1}$ is right (resp. left) Central McCoy.
\end{thm}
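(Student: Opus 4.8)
The plan is to exploit the fact that, since $S$ consists of central regular elements, the localization $RS^{-1}$ is a central (Ore) localization in which $R$ embeds via $a \mapsto a/1$ (the embedding is injective because regular elements of $S$ kill nothing), and in which every element has the form $s^{-1}a = as^{-1}$ with $a \in R$, $s \in S$. The whole argument rests on one structural observation about centers: because each $s \in S$ is central in $R$, it is central and invertible in $RS^{-1}$, so $s^{-1}$ is central there as well; consequently an element $s^{-1}a$ commutes with every $t^{-1}b$ if and only if $a$ commutes with every $b \in R$. Thus $C(RS^{-1}) = \{\, s^{-1}a : a \in C(R),\, s \in S \,\}$, and in particular $C(R) \subseteq C(RS^{-1})$ together with $R \cap C(RS^{-1}) = C(R)$. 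I would record this center identity as a preliminary step, since both directions hinge on it.

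For the forward implication, assume $R$ is right Central McCoy and take nonzero $F(x) = \sum_i \alpha_i x^i$, $G(x) = \sum_j \beta_j x^j \in RS^{-1}[x]$ with $F(x)G(x) = 0$. Putting the coefficients over common central denominators, I write $\alpha_i = s^{-1}a_i$ and $\beta_j = t^{-1}b_j$ with $s,t \in S$ and $a_i, b_j \in R$. Multiplying through by the central regular elements $s$ and $t$ yields $f(x) = sF(x) = \sum_i a_i x^i$ and $g(x) = tG(x) = \sum_j b_j x^j$ in $R[x]$, which are nonzero (regularity of $s,t$) and satisfy $f(x)g(x) = st\,F(x)G(x) = 0$. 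Right Central McCoy of $R$ gives a nonzero $r \in R$ with $a_i r \in C(R)$ for all $i$. Since $a_i = s\alpha_i$ in $RS^{-1}$ and $s^{-1} \in C(RS^{-1})$, I conclude that $\alpha_i r = s^{-1}(a_i r)$ is a product of two central elements, hence $\alpha_i r \in C(RS^{-1})$; as $r$ stays nonzero in $RS^{-1}$, this witnesses the Central McCoy property for $F, G$.

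Conversely, assume $RS^{-1}$ is right Central McCoy and let $f(x) = \sum_i a_i x^i$, $g(x) = \sum_j b_j x^j \in R[x]$ be nonzero with $f(x)g(x) = 0$. Viewing them inside $RS^{-1}[x]$ through the embedding, they remain nonzero and annihilate one another, so there is a nonzero $\rho = s^{-1}c \in RS^{-1}$ (with $c \in R$, $c \neq 0$) such that $a_i \rho \in C(RS^{-1})$ for every $i$. Then $a_i c = s(a_i \rho) \in C(RS^{-1})$, and since $a_i c$ already lies in $R$, the identity $R \cap C(RS^{-1}) = C(R)$ forces $a_i c \in C(R)$; thus $c$ is the required nonzero element and $R$ is right Central McCoy. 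The left-hand statements follow by the symmetric argument.

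The proof is essentially a transfer argument, and I expect no serious obstacle beyond carefully establishing the center identity $C(RS^{-1}) = S^{-1}C(R)$ --- in particular the equality $R \cap C(RS^{-1}) = C(R)$, which is exactly what lets the converse pull a central-in-$RS^{-1}$ product back to a central-in-$R$ element. The only other point demanding care is the bookkeeping with common denominators and the verification that clearing them preserves nonvanishing, both of which are immediate from the regularity of the elements of $S$.
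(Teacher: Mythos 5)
Your proof is correct and takes essentially the same approach as the paper: clear the central regular denominators to reduce to polynomials over $R$, apply the Central McCoy hypothesis on the other side, and transfer the resulting central element back through the localization. The paper works coefficientwise with elements $u, v, c_{i}, d_{j} \in S$ and checks centrality by a direct commutation computation instead of isolating the identity $C(RS^{-1}) = S^{-1}C(R)$, but these differences are cosmetic.
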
\label{theorem1}

\begin{proof}
 Suppose that $R$ is right Central McCoy. Let $f(x)=\sum_{i=0}^n a_{i}s_{i}^{-1}x^{i}$ and  $g(x)=\sum_{j=0}^m b_{j}t_{j}^{-1}x^{j} \in (RS^{-1})[x]\setminus\{0\}$  such that $f(x)g(x)=0$. Then we may find $u, v, c_{i}$ and $d_{j} \in S$ such that $uf(x)= \sum_{i=0}^n a_{i}c_{i}x^{i}$ , $vg(x)=\sum_{j=0}^m b_{j} d_{j} x^{j} \in R[x]$ and $(uf(x))(vg(x))=0$. By supposition, there exists $r \in R$ such that  $a_{i}c_{i}r \in C(R).$ Equvalently, $a_{i}c_{i}rt=ta_{i}c_{i}r$, for any $t \in R$ . Therefore $a_{i}rt=ta_{i}r$. So $a_{i}r \in C(R)$. Therefore  $a_{i}s_{i}^{-1} r \in C(RS^{-1})$. Thus $RS^{-1}$ is right Central McCoy. \\
Conversely, assume that $RS^{-1}$ is right Central McCoy. Let $f(x)=\sum_{i=0}^n a_{i}x^{i}$ and  $g(x)=\sum_{j=0}^m b_{j}x^{j} \in R[x]\setminus\{0\}$  such that $f(x)g(x)=0$. Then there exists $rs^{-1} \in RS^{-1}$ such that  $a_{i}rs^{-1} \in C(RS^{-1}),$ since $RS^{-1}$ is right Central McCoy. Equvalently,~$a_{i}rs^{-1}hp^{-1}=hp^{-1}a_{i}rs^{-1}$, for any $hp^{-1} \in S^{-1}R$ . Therefore, $a_{i}rh=ha_{i}r$. So $a_{i}r \in C(R)$. Thus $R$ is right Central McCoy.
\end{proof}

\begin{cor}\label{Corollary}
For a ring $R$, the following are equivalent:\\
(1) $R$ is right (resp. left) Central McCoy ring\\
(2) $R[x]$ is right (resp. left) Central McCoy ring\\
(3) $R[x,x^{-1}]$ is right (resp. left) Central McCoy ring\\
(4) $\frac{R[x]}{(x^{n})}$ is right (resp. left) Central McCoy ring\\
(5) $R[\{x_{\alpha}\}]$  is right (resp. left) Central McCoy ring, where $\{x_{\alpha}\}$ is any set of commuting indeterminates over R.
\end{cor}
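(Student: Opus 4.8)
The plan is to reduce every equivalence to the three theorems already established, organizing the argument around condition $(1)$ by proving $(1)\Leftrightarrow(2)$, $(2)\Leftrightarrow(3)$, $(1)\Leftrightarrow(4)$, and $(1)\Leftrightarrow(5)$. The equivalence $(1)\Leftrightarrow(2)$ is immediate, since it is exactly the content of the theorem stating that $R[x]$ is right (resp. left) Central McCoy if and only if $R$ is. For $(2)\Leftrightarrow(3)$ I would realize the Laurent ring as a localization: set $S=\{1,x,x^{2},\dots\}$, and observe that $x$ is central and regular in $R[x]$, so $S$ is a multiplicatively closed set of central regular elements with $R[x]S^{-1}=R[x,x^{-1}]$. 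Applying the localization theorem with $R[x]$ in place of $R$ then shows $R[x]$ is right (resp. left) Central McCoy if and only if $R[x,x^{-1}]$ is, which combined with $(1)\Leftrightarrow(2)$ yields $(1)\Leftrightarrow(3)$.

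For $(1)\Leftrightarrow(4)$ I would invoke the theorem characterizing right (resp. left) Central McCoy rings through $V_{n}(R)$, together with the isomorphism $V_{n}(R)\cong R[x]/(x^{n})$ recorded in its statement. This gives, for each $n\ge 1$, that $R$ is right (resp. left) Central McCoy if and only if $R[x]/(x^{n})$ is, which is precisely $(4)$.

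The substantive step is $(1)\Leftrightarrow(5)$. For the forward direction, take nonzero $f(y),g(y)\in R[\{x_{\alpha}\}][y]$ with $f(y)g(y)=0$; only finitely many indeterminates $x_{1},\dots,x_{k}$ occur among the coefficients, so $f,g\in R[x_{1},\dots,x_{k}][y]$. Iterating $(1)\Leftrightarrow(2)$ $k$ times shows $R[x_{1},\dots,x_{k}]$ is right Central McCoy, producing a nonzero $r$ in that ring with each coefficient of $f$ times $r$ central in $R[x_{1},\dots,x_{k}]$; since the remaining $x_{\beta}$ commute with $R$ and with $x_{1},\dots,x_{k}$, this element remains central in all of $R[\{x_{\alpha}\}]$, giving $(5)$. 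For the converse, given nonzero $f(y),g(y)\in R[y]$ with $f(y)g(y)=0$, regard them in $R[\{x_{\alpha}\}][y]$ and obtain nonzero $r\in R[\{x_{\alpha}\}]$ with $a_{i}r\in C(R[\{x_{\alpha}\}])$ for every coefficient $a_{i}$ of $f$. Writing $r=\sum_{\mu} r_{\mu}m_{\mu}$ over monomials $m_{\mu}$ in the $x_{\alpha}$ and comparing coefficients of each monomial in the centrality relations forces $a_{i}r_{\mu}\in C(R)$ for all $\mu$ and all $i$, so any nonzero $r_{\mu}$ witnesses that $R$ is right Central McCoy. The left-hand versions are symmetric throughout.

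The main obstacle I anticipate lies in the bookkeeping for $(1)\Leftrightarrow(5)$: one must verify that centrality is preserved when passing from the finite subring $R[x_{1},\dots,x_{k}]$ up to $R[\{x_{\alpha}\}]$, and that a single monomial coefficient $r_{\mu}$ can be chosen to work simultaneously for all coefficients $a_{i}$ of $f$. Both points rest on the fact that the indeterminates commute with one another and with $R$, so that centrality may be tested monomial-by-monomial; once this is made precise the remaining computations are routine.
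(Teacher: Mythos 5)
Your proposal is correct and follows essentially the same route as the paper: $(1)\Leftrightarrow(2)$ from the polynomial-ring theorem, $(1)\Leftrightarrow(4)$ from the $V_{n}(R)\cong R[x]/(x^{n})$ characterization, $(2)\Leftrightarrow(3)$ via the localization theorem at $S=\{1,x,x^{2},\dots\}$, and $(1)\Leftrightarrow(5)$ by reduction to finitely many indeterminates. Your write-up is in fact slightly more careful than the paper's on $(1)\Leftrightarrow(5)$, where the paper's displayed conclusion ($F(y)h_{1}=0$) is a slip for the centrality condition $a_{i}h_{1}\in C(R[\{x_{\alpha}\}])$ that you state and justify correctly.
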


\begin{proof}
$(1) \Leftrightarrow (2)$ is due to Theorem 2.6 and $(1) \Leftrightarrow (4)$ is by Theorem 2.9.
 $(1)\Rightarrow (5)$. Let $ F(y), G(y) \in R[\{x_{\alpha}\}][y]$ with $G(y)F(y)=0$. Then $F(y), G(y) \in R[x_{\alpha_{1}},x_{\alpha_{2}},...,x_{\alpha_{n}}][y]$ for some finite subset $\{x_{\alpha_{1}},x_{\alpha_{2}},...,x_{\alpha_{n}}\} \subseteq \{x_\alpha\}$. Following $"(1)\Rightarrow (2)"$ and by induction, the ring $R[x_{\alpha_{1}},x_{\alpha_{2}},...,x_{\alpha_{n}}]$ is right Central McCoy, so there exists nonzero $h_{1} \in R[x_{\alpha_{1}},x_{\alpha_{2}},...,x_{\alpha_{n}}]  \subseteq R[\{x_\alpha\}]$ such that $F(y)h_{1}=0$. Hence, $R[\{x_\alpha\}]$ is right Central McCoy.
 
 $(5)\Rightarrow (1)$ is similar to $"(2) \Rightarrow (1)"$.
 
 $(2) \Leftrightarrow (3)$ Let $S={1,x,x^{2},...}$. Then clearly $S$ is multiplicatively closed subset of $R[x]$ consisting entirely of central regular elements. Since $R[x,x^{-1}]=R[x]S^{-1}, R[x,x^{-1}]$ is right central McCoy if and only if $R[x]$ is right Central McCoy by Theorem 2.12.
\end{proof}

  \singlespacing
\small

\end{document}